\newcommand{\lbl}[1]{\label{#1}}
\newtheorem{theo}{Theorem}[section]
\newtheorem{lem}{Lemma}[section]
\newcommand{\be}{\begin{equation}}
\newcommand{\ee}{\end{equation}}
\newcommand\bes{\begin{eqnarray}} \newcommand\ees{\end{eqnarray}}
\newcommand{\bess}{\begin{eqnarray*}}
\newcommand{\eess}{\end{eqnarray*}}
\newcommand\kk{\left}
\newcommand\rr{\right}
\newcommand\dd{\displaystyle}
\newcommand\yy{\infty}
\newcommand\ol{\overline}
\newcommand\nn{\nabla}
\newcommand\oo{\Omega}
\newcommand\tr{\Delta}
\newcommand\qq{\eqref}
\begin{document}
\setlength{\baselineskip}{16pt} \pagestyle{myheadings}

\begin{center}{\LARGE\bf Global solvability of a predator-prey model}\\[4mm]
 {\LARGE\bf  with predator-taxis and prey-taxis}\footnote{The first author was supported by {\it Educational research projects for young and middle-aged teachers in Fujian (No. JAT200480)} and {\it Startup Foundation for Advanced Talents of Xiamen University of Technology (No. YKJ20019R)}. The second author was
supported by NSFC Grant 11771110}\\[4mm]
 {\Large  Jianping Wang\footnote{{\sl E-mail}: jianping0215@163.com}}\\[1mm]
{School of Applied Mathematics, Xiamen University of Technology, Xiamen, 361024, China}\\[4mm]
{\Large  Mingxin Wang\footnote{Corresponding author. {\sl E-mail}: mxwang@hit.edu.cn}}\\[1mm]
{School of Mathematics, Harbin Institute of Technology, Harbin 150001, China}
\end{center}

\begin{quote}
\noindent{\bf Abstract.} This paper is concerned with a diffusive predator-prey model with predator-taxis and prey-taxis. Based on the Schauder fixed point theorem, we prove the global existence, uniqueness and boundedness of the classical solutions under the conditions that the predator-taxis and prey-taxis effects are weak enough.

\noindent{\bf Keywords:} Predator-prey model; Predator-taxis; Prey-taxis;  Existence and boundedness.

\noindent {\bf AMS subject classifications (2010)}:
35A01, 35K51, 35K57, 92C17.
 \end{quote}

 \section{Introduction and main results}
 \setcounter{equation}{0} {\setlength\arraycolsep{2pt}

Attractive prey-taxis describes the biological phenomenon that the predator move towards higher concentrations of prey. It was first observed in \cite{Kare} that the ladybugs (predators) in area-restricted search tend to move toward areas with high aphids (prey) density to increase the efficiency of predation. Since the pioneer work of \cite{Kare}, the prey-taxis systems have been widely investigated by many authors. The general form of the prey-taxis system with constant taxis coefficient is
 \bes
 \left\{\begin{array}{lll}
  u_t=d_1\Delta u-\chi\nabla\cdot(u\nabla v)-uh(u)+c_1uF(v),\\[1.5mm]
 v_t=d_2\Delta v+g(v)-uF(v),\\[1.5mm]
 \end{array}\right.\label{1.0}
 \ees
where the unknown functions $u(x,t), v(x,t)$ represent the density of the predator and prey, respectively. The term $h(u)$ describes the mortality rate of predators. The function $g(v)$ is the growth function of prey. $F(v)$ is the functional response function accounting for the intake rate of predators as a function of prey density and the parameter $c_1$ is the conversion rate. The term $-\chi\nabla\cdot(u\nabla v)$ represents the prey-taxis effect, where $\chi$ is a positive constant.

The global existence, uniqueness and boundedness of solutions of \eqref{1.0} have been studied by many authors, see, for example, \cite{HeZ, JinW, Tao, WjpW-CAMWA,W-S-W2017,xiang-RWA} and the references therein. Especially, it was discovered in \cite{JinW,W-S-W2017} that the system is global solvable in two space dimension, while smallness assumption for $\chi$ can prevent blow up in high dimensions. For a parabolic-elliptic version of \eqref{1.0},  the global existence of solutions and global stability of a spatial homogeneous equilibrium were established in \cite{WinklerJDE17}.

Repulsive predator-taxis explains the phenomenon that prey move away from the gradient of predator. The example is the presence of bass (predator) restricts crayfish (prey) foraging and increases anti-predator behaviour such as shelter seeking \cite{Hill}. A typical form of predator-taxis system is
\bess
 \left\{\begin{array}{lll}
 u_t=d_1\Delta u-uh(u)+c_1uF(v),\\[1.5mm]
 v_t=d_2\Delta v+\xi\nabla\cdot(v\nabla u)+g(v)-uF(v),\\[1.5mm]
 \end{array}\right.
 \eess
Here $\xi\nabla\cdot(v\nabla u)$ represents the repulsive predator-taxis mechanism, where the constant $\xi>0$. For $F(v)\le kv$ with $k>0$ and sufficiently small $\xi$, the global existence and boundedness of solutions, existence and stability of coexistence steady state solutions as well as Turing instability are given in \cite{W-W-S2018}.

Assume from now on that $h(u)=a_1+b_1u$, $g(v)=a_2v-b_2v^2$ and $F(v)=v$. By combing the above two taxis mechanisms, it arrives at the following system
\bes
 \left\{\begin{array}{lll}
 u_t=d_1\Delta u-\chi\nabla\cdot(u\nabla v)+u(-a_1-b_1u+c_1v), \ \ &x\in\Omega,\ \ t>0,\\[1mm]
 v_t=d_2\Delta v+\xi\nabla\cdot(v\nabla u)+v(a_2-b_2v-u),&x\in\Omega,\ \ t>0,\\[1mm]
 \dd\frac{\partial u}{\partial\nu}=\dd\frac{\partial v}{\partial\nu}=0,\ \ &x\in\partial\Omega, \ \ t>0,\\[1mm]
 u(x,0)=u_0(x),\ v(x,0)=v_0(x), &x\in\Omega,
 \end{array}\right.\label{1.1}
 \ees
where $\Omega$ is a bounded domain in $\mathbb{R}^n$ with smooth boundary $\partial\Omega$, $\nu$ denotes the outward normal vector on $\partial\Omega$, and constants $\chi,\ \xi,\,a_i,\ b_i,\,c_1>0$, $i=1,2$. System \eqref{1.1} is also referred to as a pursuit-evasion model (\cite{TsyganovBHB2003}).

Despite the well development of the prey-taxis system, the surveys of the predator-prey system with predator-taxis and prey-taxis are at an early stage. In \cite{taow-2020arxiv,taow-2021jfa}, the global existence and large time behavior of weak solutions are constructed in a bounded interval in one space dimension. It is shown in \cite{fuest2020siam} that, under some conditions on the initial data, system \eqref{1.1} admits global classical solutions near homogeneous steady states and these solutions converge to the homogeneous steady states. In \cite{fuest2021arxiv}, global weak solutions to a variant of \eqref{1.1} with nonlinear diffusion and saturated taxis sensitivity are constructed. The spatial pattern formation induced by the prey-taxis and predator-taxis is clarified in \cite{wang-w-s2021}. As it has been stated in \cite{taow-2020arxiv,taow-2021jfa,fuest2020siam}, it is more challenging to analysis \eqref{1.1} compared with the single-taxis system, even for the local existence of solutions. This article proves the global existence of the classical solutions provided the taxis mechanisms are weak enough.

Notations of H\"{o}lder spaces from \cite{Deuring-MZ} and \cite[Chapter 3]{Friedman} are very important for our conclusion. We denote $Q_T=\Omega\times(0,T]$ with $T\in(0,\infty)$ and
 \[\|\cdot\|_p=\|\cdot\|_{L^p(\Omega)}, \ \ \|\cdot\|_{2+\alpha,\,\bar\Omega}
 =\|\cdot\|_{C^{2+\alpha}(\bar\Omega)}, \ \ |\cdot|_{i+\alpha,\,\ol Q_T}=|\cdot|_{C^{i+\alpha,\,\frac{i+\alpha}2}(\ol Q_T)}, \ \ i=0,1,2\]
for the simplicity. Especially, $|\cdot|_{0,\,\ol Q_T}=|\cdot|_{C(\ol Q_T)}$. Throughout this paper the initial data $u_0,v_0$ are supposed to satisfy
 \bes
u_0,\, v_0\in C^{2+\alpha}(\bar\Omega)\ {\rm with}\ \alpha\in(0,1),\ \ \,u_0,\, v_0\ge0\ \ {\rm in}\ \bar\Omega,\ \ \ \dd\frac{\partial u_0}{\partial\nu}=\dd\frac{\partial v_0}{\partial\nu}=0\ \ {\rm on}\ \partial\Omega.\label{1.3a}
 \ees
We state the main result as follows:
\begin{theo}\label{t1}\, Let $n\ge1$. Then there exist $k>0$ and $C>0$ such that for
\bess
0<\chi,\xi<k,
\eess
the problem \qq{1.1} has a unique global solution $(u,v)\in[C^{2,1}(\bar\Omega\times[0,\infty))]^2$, and
 \bes
 \|u(\cdot,t)\|_{C^2(\bar\oo)}+\|v(\cdot,t)\|_{C^2(\bar\oo)}\le C\ \ \ for\ all\ t\in(0,\yy).\label{1.2}
\ees
\end{theo}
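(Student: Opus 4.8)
The plan is to fix an arbitrary $T\in(0,\infty)$, solve \eqref{1.1} on $\bar Q_T$ by the Schauder fixed point theorem with every bound uniform in $T$, and then let $T\to\infty$ to produce a global solution obeying \eqref{1.2}. To build the solution operator I would freeze the taxis velocities and the interaction coefficients: starting from a pair $(\tilde u,\tilde v)$ taken from the closed, bounded, convex set
\[
\mathcal S=\Big\{(\tilde u,\tilde v)\in \big[C^{1+\alpha,\,(1+\alpha)/2}(\bar Q_T)\big]^2:\ \tilde u,\tilde v\ge0,\ |\tilde u|_{1+\alpha,\,\bar Q_T}+|\tilde v|_{1+\alpha,\,\bar Q_T}\le M\Big\}
\]
of nonnegative functions carrying the initial data \eqref{1.3a}, I let $(u,v)=\mathcal F(\tilde u,\tilde v)$ solve the two linear parabolic problems obtained from \eqref{1.1} by inserting $\tilde v$ into the predator-taxis flux $-\chi\nabla\cdot(u\nabla v)$, inserting $\tilde u$ into the predator-taxis flux $\xi\nabla\cdot(v\nabla u)$, and freezing the reaction coefficients at $(\tilde u,\tilde v)$. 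Since $\nabla\tilde u,\nabla\tilde v\in C^{\alpha,\,\alpha/2}(\bar Q_T)$, these are linear problems whose leading structure is $d_i\Delta$ with lower-order data of class $C^{\alpha}$; the parabolic solvability and Schauder theory in Hölder spaces recorded in \cite{Deuring-MZ} and \cite[Chapter 3]{Friedman} then yield a unique $(u,v)\in[C^{2+\alpha,\,(2+\alpha)/2}(\bar Q_T)]^2$. (The divergence structure of the taxis terms requires some bookkeeping to match the class of the frozen and the returned functions; this is handled by the parabolic estimates just cited, with an intermediate $L^p$ step if needed.)

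I would then verify the three hypotheses of Schauder's theorem. The set $\mathcal S$ is closed, bounded and convex by construction; continuity of $\mathcal F$ follows by applying the same linear estimates to differences of solutions; and precompactness of $\mathcal F(\mathcal S)$ follows from the regularity gain $C^{1+\alpha}\to C^{2+\alpha}$ together with the compact embedding of $C^{2+\alpha,\,(2+\alpha)/2}(\bar Q_T)$ into $C^{1+\alpha,\,(1+\alpha)/2}(\bar Q_T)$. The decisive point is the self-mapping property $\mathcal F(\mathcal S)\subset\mathcal S$: in the Schauder estimate for $(u,v)$ the taxis contributions enter with the explicit factors $\chi$ and $\xi$ multiplying powers of $M$, so choosing $0<\chi,\xi<k$ small enough lets these contributions be absorbed and the radius $M$ be fixed once and for all, independently of $T$. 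A fixed point of $\mathcal F$ is exactly a classical solution of \eqref{1.1} on $\bar Q_T$.

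Making the radius $M$ genuinely $T$-independent is where the real work lies, through a priori estimates that survive as $T\to\infty$. Nonnegativity of $u$ and $v$ comes from the maximum principle, since the reaction terms $u(-a_1-b_1u+c_1v)$ and $v(a_2-b_2v-u)$ carry the factors $u$ and $v$ and hence vanish on $\{u=0\}$ and $\{v=0\}$. An $L^\infty$ bound for $v$ should come from the logistic structure $v(a_2-b_2v-u)\le v(a_2-b_2v)$, giving essentially $v\le\max\{\|v_0\|_\infty,\,a_2/b_2\}$ up to a taxis correction controlled by $\xi$; feeding this into the $u$-equation then yields an $L^\infty$ bound for $u$ depending on $\sup v$ and on $\chi$. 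With these bounds, a bootstrap through parabolic $L^p$ and Schauder estimates upgrades $(u,v)$ to uniform $C^{1+\alpha}$ and then $C^{2+\alpha}$ bounds on $\bar Q_T$, uniformly in $T$; this is precisely what both closes the self-mapping step above and delivers \eqref{1.2}. Global existence then follows from the uniform-in-$T$ bound by a standard continuation argument, and uniqueness is obtained by subtracting two solutions and running a Gr\"onwall estimate on the difference in $L^2(\Omega)$, using the established bounds together with the smallness of $\chi,\xi$ to dominate the cross terms.

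The hardest part is clearly the $L^\infty$ and gradient a priori estimates in the presence of \emph{both} taxis mechanisms. The quasilinear drifts $-\chi\nabla\cdot(u\nabla v)$ and $\xi\nabla\cdot(v\nabla u)$ obstruct a direct maximum-principle argument and couple the two unknowns through their gradients, so the smallness assumption $0<\chi,\xi<k$ is indispensable: it is the mechanism by which the cross terms are dominated by the diffusion and the dissipative logistic terms, allowing all estimates to be closed uniformly in $T$ and the Schauder invariance to hold.
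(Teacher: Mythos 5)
Your overall architecture coincides with the paper's (a Schauder fixed point on a bounded convex set, smallness of $\chi,\xi$ to secure invariance, $T$-uniform estimates, Gronwall in $L^2(\Omega)$ for uniqueness), but two of your steps would fail as written. First, the regularity class of your set $\mathcal S$ is too weak. Once the taxis fluxes are frozen and expanded, the linear problem returned by $\mathcal F$ has the form $u_t-d_1\Delta u+\chi\nabla\tilde v\cdot\nabla u+\chi(\Delta\tilde v)\,u=\cdots$, so the zeroth-order coefficient $\Delta\tilde v$ must belong to $C^{\alpha,\alpha/2}(\ol Q_T)$ with norm controlled by the radius of the set; this requires $\tilde v\in C^{2+\alpha,1+\alpha/2}(\ol Q_T)$, which your $C^{1+\alpha}$-ball does not provide. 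The ``intermediate $L^p$ step'' cannot repair this: treating the equation in divergence form with a flux coefficient that is merely $C^{\alpha}$ yields at best $C^{1+\beta}$ solutions, so $\mathcal F$ does not return $C^{2+\alpha}$ functions, the compactness chain $C^{2+\alpha}\hookrightarrow C^{1+\alpha}$ collapses, and the fixed point is not a classical solution. Nor can you recover $C^{2+\alpha}$ regularity afterwards by bootstrapping: in this fully cross-diffusive system each equation carries second derivatives of the \emph{other} unknown, so knowledge of $(u,v)\in C^{1+\alpha}$ does not make either equation Schauder-admissible --- this is exactly the obstruction emphasized in \cite{taow-2020arxiv,fuest2020siam}, and it is why the paper's set $\Sigma$ carries the three-tier bounds $0\le u\le\sigma R$, $|u|_{\alpha,\,\ol Q_T}\le\sigma PR$, $|u|_{2+\alpha,\,\ol Q_T}\le\rho$: the $C^{2+\alpha}$ bound on the frozen component is an \emph{input} to the scheme, not an output of a posteriori bootstrapping.

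Second, your a priori $L^\infty$ estimate is circular as stated, and your $T$-uniformity is asserted rather than obtained. The ``taxis correction controlled by $\xi$'' in the $v$-bound is $\xi v\Delta u$, which smallness of $\xi$ controls only if $\|\Delta u\|_{\infty}$ is already uniformly bounded --- again the $C^{2+\alpha}$ bound on the other component. In the paper, the comparison argument (upper solution $\bar v\equiv R$ in Lemma \ref{l3.1}) works precisely because $|u|_{2+\alpha,\,\ol Q_T}\le\rho$ is built into $\Sigma$, and the invariance of that $C^{2+\alpha}$ bound is in turn secured by the quantitative choice \eqref{2.2d} of $R$ in terms of the constants $\mathcal K,\mathcal L$ of Lemma \ref{l1.1}, i.e.\ Deuring's Theorem 3.1 from \cite{Deuring-MZ}, whose constants depend only on $(\alpha,k,\varepsilon)$ and \emph{not on $T$} for $T\in[1,\infty)$. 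This last point is the decisive ingredient you leave implicit: standard parabolic Schauder constants on $\ol Q_T$ (e.g.\ \cite[Theorem IV.5.3]{L-S-Y1968}, which the paper uses only for continuity of $\mathcal F$ at fixed $T$) grow with $T$, so without Deuring-type $T$-independent estimates your radius $M$ cannot be fixed once and for all, the self-map fails for large $T$, and the limit $T\to\infty$ yielding \eqref{1.2} does not go through. In short: the estimates and the fixed-point invariance must be closed \emph{simultaneously}, with the smallness threshold $k$ calibrated against $T$-uniform Schauder constants, rather than sequentially as ``a priori bounds first, fixed point second.''
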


The idea of proving Theorem \ref{t1} is inspired by \cite{Deuring-MZ}. Based on the Schauder-type estimates, we first derive a priori estimates. And then by use of the Schauder fixed point theorem, we prove the existence and boundedness of solutions of the problem \eqref{1.1}. Finally, we show the uniqueness. We remark that under the assumption that the mortality rates are density dependent (i.e., $b_1,b_2>0$), one can obtain similar conclusions for other form of \eqref{1.1} via following the arguments leading to Theorem \ref{t1}.

\section{Global existence and boundedness}
\setcounter{equation}{0} {\setlength\arraycolsep{2pt}

\subsection{A basic lemma and some notations}
\begin{lem}{\rm (\cite[Theorem 3.1]{Deuring-MZ})}\label{l1.1}\, {\rm(i)}\,
There is a function $\mathcal K:(0,1)\times(0,\infty)^2\rightarrow(0,\infty)$ with the following property:
\vspace{-2mm}\begin{quote}
Let $\alpha\in(0,1)$, $0<\varepsilon\le k$, $T\in[1,\infty)$, $a,b_i,c,f\in  C^{\alpha,\alpha/2}(\overline Q_T)$, $\phi\in C^{2+\alpha}(\bar\Omega)$, with $\max\{|a|_{\alpha,\,\ol Q_T},|b_i|_{0,\ol Q_T},\,|c|_{0,\ol Q_T}\}\le k$ and $a\ge \varepsilon$ on $\ol Q_T$. If $u\in C^{2,1}(\ol Q_T)$ solves
 \bess
 \left\{\begin{array}{lll}
 u_t-a(x,t)\Delta u+\dd\sum_{i=1}^n b_iD_iu-cu=f,&x\in \Omega,\ t\in(0,T],\\[1mm]
 \dd\frac{\partial u}{\partial\nu}=0,\ \ &x\in \partial\Omega,\ t\in[0,T],\\[2mm]
 u(x,0)=\phi(x), &x\in\bar\Omega,
 \end{array}\right.
 \eess
then
\bess
|u|_{\alpha,\,\ol Q_T}\le \mathcal K(\alpha,k,\varepsilon)\left(|f|_{0,\ol Q_T}+\|\phi\|_{2,\,\bar\Omega}+|u|_{0,\ol Q_T}\right).
\eess\end{quote}

{\rm(ii)}\, There is a function $\mathcal L:(0,1)\times(0,\infty)^2\rightarrow(0,\infty)$ with the following property:
\vspace{-2mm}
\begin{quote}
Let $\alpha,k,\varepsilon,T,a,b_i,c,f,\phi,u$ be given as in {\rm(i)}, but with the assumption: $\max\{|a|_{\alpha,\,\ol Q_T},|b_i|_{0,\,\ol Q_T}$, $|c|_{0,\,\ol Q_T}\}\le k$ replaced by the stronger condition: $\max\{|a|_{\alpha,\,\ol Q_T},|b_i|_{\alpha,\ol Q_T},\,|c|_{\alpha,\ol Q_T}\}\le k$. Then
\bess
|u|_{2+\alpha,\,\ol Q_T}\le \mathcal L(\alpha,k,\varepsilon)\left(|f|_{\alpha,\,\ol Q_T}+\|\phi\|_{2+\alpha,\,\bar\Omega}+|u|_{0,\,\ol Q_T}\right).
\eess\end{quote}
\end{lem}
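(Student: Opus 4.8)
\emph{Proof plan.} These are the parabolic Schauder inequalities of Ladyzhenskaya--Solonnikov--Ural'tseva for the Neumann problem; the only nonstandard feature is that the constants $\mathcal K,\mathcal L$ do not depend on the length $T$ of the time interval. The plan is to obtain this $T$-independence by a time-localization argument that reduces each global estimate to a \emph{local} one on time windows of fixed length, whose constant is uniform in the position of the window, and then to globalize by a near/far decomposition of the parabolic H\"older seminorms. The term $|u|_{0,\ol Q_T}$ on the right is not a defect but the mechanism that decouples the windows.

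First I would record the local building block on a window $\oo\times[t_0,t_0+2]$. Because $a\ge\ep$ and $|a|_{\alpha,\ol Q_T}\le k$ (and, for part (ii), also $|b_i|_{\alpha,\ol Q_T},|c|_{\alpha,\ol Q_T}\le k$), the classical interior-and-boundary parabolic Schauder theory with conormal/Neumann data yields a constant $\mathcal L_0=\mathcal L_0(\alpha,k,\ep,\oo)$, \emph{independent of $t_0$ and $T$}, with
\[
|u|_{2+\alpha,\,\ol\oo\times[t_0+1,\,t_0+2]}\le \mathcal L_0\big(|f|_{\alpha,\,\ol\oo\times[t_0,t_0+2]}+|u|_{0,\,\ol\oo\times[t_0,t_0+2]}\big).
\]
No initial data appear here since the estimate is interior in time: the inner window is controlled by the data and the sup-norm on the slightly larger window. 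The point is time-translation invariance, as the uniform coefficient bounds hold on all of $\ol Q_T$, so one and the same $\mathcal L_0$ serves every window. For the first window $[0,2]$ I would instead use the initial--boundary Schauder estimate, which contributes the extra term $\|\phi\|_{2+\alpha,\bo}$. The analogous lower-order local estimate --- the basic parabolic $C^\alpha$ bound obtained from $W^{2,1}_p$ regularity (valid since $a\in C^\alpha\subset{\rm VMO}$ and $b_i,c\in L^\infty$) and the embedding $W^{2,1}_p\hookrightarrow C^{\alpha,\alpha/2}$ for $p$ large --- supplies the building block for part (i), requiring only $L^\infty$ bounds on $b_i,c$ and producing $|f|_{0}$ and $\|\phi\|_{2,\bo}$ on the right.

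Next I would globalize. The norm $|u|_{2+\alpha,\ol Q_T}$ is built from the sup-norms of $u,\nn u,D^2u,u_t$, their spatial $\alpha$-H\"older seminorms, and the temporal H\"older seminorms of orders $\tfrac{\alpha}{2},\tfrac{1+\alpha}{2},\tfrac{2+\alpha}{2}$. Every sup-norm equals the maximum over windows of its local value, each controlled by the building block; since the window constant does not accumulate, all sup-norms are already bounded $T$-independently. For the seminorms I split each pair of points into \emph{near} and \emph{far}. A near pair (time separation below one window length, hence lying in at most two adjacent windows, or spatial separation below a fixed $\delta$) is controlled by the local H\"older seminorm, again by the building block. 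A far temporal pair has $|t-s|\ge1$, so for any derivative $D^\beta u$ of the relevant order, $\frac{|D^\beta u(x,t)-D^\beta u(x,s)|}{|t-s|^{\gamma}}\le 2\,|D^\beta u|_{0,\ol Q_T}$ because the denominator is at least one; a far spatial pair is handled identically using ${\rm diam}\,\oo<\infty$. Thus every far contribution is dominated by an already-controlled sup-norm. Summing near and far parts gives $|u|_{2+\alpha,\ol Q_T}\le \mathcal L(\alpha,k,\ep)\,(|f|_{\alpha,\ol Q_T}+\|\phi\|_{2+\alpha,\bo}+|u|_{0,\ol Q_T})$ with $\mathcal L$ independent of $T$; feeding the lower-order building block into the same scheme proves (i) with $\|\phi\|_{2,\bo}$ and $|f|_{0,\ol Q_T}$.

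The main obstacle is precisely the $T$-uniformity, and it rests on two points I would justify with care. First, I must verify that the local window constant is genuinely independent of $t_0$ and of $T$; this is where time-translation invariance of the uniform coefficient bounds is essential, and where the Neumann boundary estimates --- obtained by flattening the smooth boundary $\pl\oo$ and invoking the half-space parabolic Schauder/conormal estimates --- must be cited in their $t_0$-uniform form. Second, the bookkeeping of the near/far split across the several temporal seminorm orders of the $C^{2+\alpha,(2+\alpha)/2}$ norm, though routine, is the part most prone to error and is exactly the content of \cite[Theorem 3.1]{Deuring-MZ}, whose argument I would follow.
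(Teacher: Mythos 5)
The first thing to say is that the paper contains no proof of this statement for you to be compared against: Lemma 2.1 is imported verbatim, with attribution, as Theorem 3.1 of Deuring \cite{Deuring-MZ}, and the authors outsource its proof entirely to that reference. Judged on its own merits, your plan is the correct one and is close in spirit to the cited source: the entire content of the lemma is the $T$-uniformity of $\mathcal K$ and $\mathcal L$, and this is indeed obtained by working on time windows of fixed length (so the classical Schauder and $W^{2,1}_p$ constants enter only through the window length, the uniform coefficient bounds and $\varepsilon$, hence are invariant under time translation), then reassembling the global seminorms with far-in-time pairs $|t-s|\ge 1$ dominated trivially by sup-norms --- which is precisely why the term $|u|_{0,\,\overline Q_T}$ must appear on the right-hand side. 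You also correctly identify that in part (i) only $|b_i|_{0},|c|_{0}$ are assumed, forcing the lower-order building block to come from $L^p$ theory (with $p>n+2$ and the embedding $W^{2,1}_p\hookrightarrow C^{1+\gamma,(1+\gamma)/2}\subset C^{\alpha,\alpha/2}$) rather than from Schauder theory, and that the first window alone contributes $\|\phi\|_{2,\,\bar\Omega}$ respectively $\|\phi\|_{2+\alpha,\,\bar\Omega}$.

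One step deserves more care than your sketch gives it. Your local building block
$|u|_{2+\alpha,\,\overline\Omega\times[t_0+1,t_0+2]}\le \mathcal L_0\bigl(|f|_{\alpha,\,\overline\Omega\times[t_0,t_0+2]}+|u|_{0,\,\overline\Omega\times[t_0,t_0+2]}\bigr)$
is not literally the statement of the classical initial--boundary Schauder estimate: the standard way to make it ``interior in time'' is to multiply by a temporal cutoff $\zeta$ with $\zeta\equiv1$ on $[t_0+1,t_0+2]$ and $\zeta\equiv0$ near $t_0$, and then the commutator term $\zeta' u$ enters the right-hand side measured in $C^{\alpha,\alpha/2}$, producing $|u|_{\alpha}$ on the larger window rather than $|u|_{0}$. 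One must then interpolate, $|u|_{\alpha}\le\eta|u|_{2+\alpha}+C(\eta)|u|_{0}$, and absorb the resulting $\eta|u|_{2+\alpha}$ term by a standard iteration over nested windows; a careless write-up at this point yields a circular estimate. Either carry out that absorption explicitly or cite a formulation that already carries the sup-norm form (e.g.\ the local estimates in \cite{L-S-Y1968}, Ch.~IV). Minor bookkeeping to add: the case $T\in[1,2)$, where the initial--boundary estimate alone covers $\overline Q_T$; the compatibility $\partial\phi/\partial\nu=0$ inherited from $u\in C^{2,1}(\overline Q_T)$, which is needed for the first-window estimates; and, in part (i), the trace bound $\|\phi\|_{W^{2-2/p}_p(\Omega)}\le C\|\phi\|_{2,\,\bar\Omega}$ explaining why only the $C^2$ norm of $\phi$ appears. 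With these repairs your proposal is a sound reconstruction of the cited theorem.
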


For the later use, we next introduce some notations. For $0<\alpha<1$ and
\bes\left\{\begin{array}{ll}
\rho=\min\left\{d_1,\,d_2,\,b_1,\,b_2\right\},\\[2mm] \sigma=\max\left\{d_1,\,d_2,\,b_1,\,b_2,\,a_1,\,\dd\frac{3a_2}{\rho},\,
\frac{3c_1}{\rho},\,\|u_0\|_{2+\alpha,\,\bar\Omega}^{\frac12},\,
\|v_0\|_{2+\alpha,\,\bar\Omega}\right\},
\end{array}\right.
\label{2.1}\ees
we define
 \bess
 &h_1=h_1(\rho,\sigma)=\sigma(1+\rho+2\sigma), \ \ h_2=h_2(\rho,\sigma)=\sigma(1+\rho),& \\[1mm]
 &h_3=h_3(\rho,\sigma)=\sigma(1+\rho\sigma+\sigma^2), \ \ h_4=h_4(\rho,\sigma)=\sigma(1+\rho\sigma),&
 \eess
and
 \bes\left\{\begin{array}{ll}
P=2\max\{\mathcal K(\alpha,h_1,\rho), \ \mathcal K(\alpha,h_3,\rho)\},\\[2mm]
R=\min\left\{\dd\frac{3\rho}{\sigma+\sigma^3}, \ \dd\frac{\rho}{(2\sigma^2(1+2P)+2)\mathcal L_2}, \
\dd\frac{\rho}{[\sigma^3(\rho+\sigma)(1+2P)+2\sigma]\mathcal L_4}\right\},
 \end{array}\right.\lbl{2.2d}\ees
where $\mathcal L_2=\mathcal L(\alpha,h_2,\rho)$, $\mathcal L_4=\mathcal L(\alpha,h_4,\rho)$, $\mathcal L,\,\mathcal K$ are determined by Lemma \ref{l1.1}.

\subsection{Existence of solutions}
The following lemma provides a priori estimates for $v$ with given solution component $u$ and small $\xi$.

\begin{lem}\label{l3.1}\, Let $T\in[1,\infty),\alpha\in(0,1)$, $\sigma,\rho,P,R$ be given by \eqref{2.1} and \eqref{2.2d}. Assume that
\bes
0< \xi\le {R}/{3}\label{2.4}
\ees
and $u\in C^{2+\alpha,1+\alpha/2}(\overline Q_T)$ satisfying
\bes
0\le u\le \sigma R,\ |u|_{\alpha,\,\ol Q_T}\le \sigma PR,\ |u|_{2+\alpha,\,\ol Q_T}\le \rho. \label{2.5}
\ees
Then the problem
\bes
\left\{\begin{array}{lll}
 \tilde v_t-d_2\Delta \tilde v-\dd\frac{\sigma\xi}{R}\nabla u\cdot\nabla\tilde v-\left(\frac{\sigma\xi}{R}\Delta u+a_2-\frac{\sigma }{R}u-\frac{\sigma b_2}{R}\tilde v\right)\tilde v=0,&x\in\Omega,\ t\in(0,T],\\[1mm]
\dd\frac{\partial\tilde v}{\partial\nu}=0,&x\in\partial\Omega,\ t\in[0,T],\\[2mm]
 \tilde v(x,0)=\tilde v_0(x):=\dd\frac{R}{\sigma}v_0(x), &x\in\bar\Omega.
 \end{array}\right.\label{3.1a}
 \ees
admits a unique solution $\tilde v\in C^{2+\alpha,1+\alpha/2}(\overline Q_T)$. Moreover,
 \bes
 0\le \tilde v\le R,\ \ \  |\tilde v|_{\alpha,\,\ol Q_T}\le PR,\ \ \ |\tilde v|_{2+\alpha,\,\ol Q_T}\le \rho.\label{3.2}
 \ees
\end{lem}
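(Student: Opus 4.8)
The plan is to read \eqref{3.1a} as a single scalar, uniformly parabolic, semilinear Neumann problem for $\tilde v$ in which the given component $u$ enters only through the coefficients $\frac{\sigma\xi}{R}\nabla u$ and $\frac{\sigma\xi}{R}\Delta u-\frac{\sigma}{R}u+a_2$, all of which lie in $C^{\alpha,\alpha/2}(\overline{Q_T})$ by \eqref{2.5}, the only genuine nonlinearity being the logistic term $-\frac{\sigma b_2}{R}\tilde v^2$. First I would produce a unique local classical solution $\tilde v\in C^{2+\alpha,1+\alpha/2}$ by a standard semilinear argument (freeze the quadratic term and invoke linear Schauder theory, or the Schauder fixed point theorem on a short time interval), noting that the compatibility condition $\partial_\nu\tilde v_0=0$ holds because $\tilde v_0=\frac R\sigma v_0$ and $\partial_\nu v_0=0$ by \eqref{1.3a}. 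Global existence on $[0,T]$, with $T$-independent bounds, then follows from the $L^\infty$ estimate below by the usual continuation argument, and uniqueness I would obtain at the end by subtracting two solutions and running a Gronwall estimate on the difference, whose equation is linear with coefficients controlled by the already-established sup norms.

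The conceptual heart is the two-sided bound $0\le\tilde v\le R$, which makes everything global and supplies the input for the Schauder estimates. The constant $0$ is a subsolution and $\tilde v_0\ge0$, so $\tilde v\ge0$ by the parabolic maximum principle. For the upper bound I would check that the constant $R$ is a supersolution: substituting $\tilde v\equiv R$ into \eqref{3.1a} leaves the reaction rate $\frac{\sigma\xi}{R}\Delta u+a_2-\frac{\sigma}{R}u-\sigma b_2$, and since $u\ge0$ it suffices that $\frac{\sigma\xi}{R}\Delta u+a_2\le\sigma b_2$. Using $\xi\le R/3$ and $|\Delta u|_{0,\overline{Q_T}}\le|u|_{2+\alpha,\overline{Q_T}}\le\rho$ one gets $\frac{\sigma\xi}{R}\Delta u\le\frac{\sigma\rho}{3}$, while $a_2\le\frac{\sigma\rho}{3}$ by the choice of $\sigma$ in \eqref{2.1}, so the rate is at most $\frac{2\sigma\rho}{3}<\sigma\rho\le\sigma b_2$ because $\rho\le b_2$. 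Together with $\tilde v_0=\frac R\sigma v_0\le\frac R\sigma\|v_0\|_{2+\alpha,\overline\Omega}\le R$ this yields $\tilde v\le R$ by comparison. It is worth stressing that this is exactly where the weakness of the taxis ($\xi\le R/3$) and the logistic self-limitation cooperate: the damping $\sigma b_2$ must beat the net growth coming from $a_2$ and the curvature term $\frac{\sigma\xi}{R}\Delta u$.

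With $0\le\tilde v\le R$ in hand, the two remaining bounds follow from Lemma~\ref{l1.1} applied to \emph{two different algebraic rearrangements} of the same equation, and the delicate point is to arrange each so the coefficient norms fall below the prescribed thresholds $h_i$. For $|\tilde v|_{\alpha,\overline{Q_T}}\le PR$ I would keep the equation homogeneous, so that $c=\frac{\sigma\xi}{R}\Delta u+a_2-\frac{\sigma}{R}u-\frac{\sigma b_2}{R}\tilde v$ satisfies $|c|_{0,\overline{Q_T}}\le\frac{2\sigma\rho}{3}+2\sigma^2\le h_1$ (using $0\le u\le\sigma R$, $0\le\tilde v\le R$, $b_2\le\sigma$) and $f=0$; Lemma~\ref{l1.1}(i) with $k=h_1$, $\varepsilon=\rho$, and $\|\tilde v_0\|_{2,\overline\Omega}\le R$, $|\tilde v|_0\le R$ then gives $|\tilde v|_\alpha\le2\mathcal K(\alpha,h_1,\rho)R\le PR$. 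For $|\tilde v|_{2+\alpha,\overline{Q_T}}\le\rho$ I would instead move the quadratic term and the $u\tilde v$ term into the right-hand side, so the retained coefficient $c=\frac{\sigma\xi}{R}\Delta u+a_2$ has $|c|_\alpha\le\frac{2\sigma\rho}{3}\le h_2$ and $|b_i|_\alpha\le\frac{\sigma\rho}{3}\le h_2$, while the source $f=-\frac{\sigma}{R}u\tilde v-\frac{\sigma b_2}{R}\tilde v^2$ is controlled by the product rule together with $|u|_0\le\sigma R$, $|u|_\alpha\le\sigma PR$, $|\tilde v|_0\le R$, $|\tilde v|_\alpha\le PR$ to give $|f|_\alpha\le2\sigma^2(1+2P)R$. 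Lemma~\ref{l1.1}(ii) with $\mathcal L_2=\mathcal L(\alpha,h_2,\rho)$ and $\|\tilde v_0\|_{2+\alpha,\overline\Omega}\le R$ then yields $|\tilde v|_{2+\alpha}\le\mathcal L_2\big(2\sigma^2(1+2P)+2\big)R\le\rho$, the last inequality being precisely the second defining constraint on $R$ in \eqref{2.2d}. I expect this closing step to be the main obstacle: one must simultaneously pick the correct form of the equation for each estimate, verify the several coefficient norms sit inside $h_1$ and $h_2$, and track the constant $2\sigma^2(1+2P)+2$ so that the smallness threshold on $R$ closes the $C^{2+\alpha}$ estimate at exactly $\rho$ rather than at some larger multiple.
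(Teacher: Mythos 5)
Your proposal is correct and follows essentially the same route as the paper's proof: the identical sub/supersolution pair $\underline v\equiv 0$, $\bar v\equiv R$ (with the same computation $\frac{\sigma\xi}{R}|\Delta u|+a_2\le\frac{2\sigma\rho}{3}<\sigma\rho\le\sigma b_2$), the same two algebraic rearrangements of \eqref{3.1a} feeding Lemma~\ref{l1.1}(i) with threshold $h_1$ and Lemma~\ref{l1.1}(ii) with threshold $h_2$, the same product-rule bound $|f|_{\alpha,\,\ol Q_T}\le 2\sigma^2R(1+2P)$ via \eqref{3.7a}, and closure of the $C^{2+\alpha}$ estimate at $\rho$ through the second constraint defining $R$ in \eqref{2.2d}. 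The only immaterial packaging difference is that the paper obtains existence, uniqueness and the $L^\infty$ bound in one stroke from the upper and lower solutions method, whereas you get local existence by freezing the quadratic term, globalize by continuation from the $L^\infty$ bound, and prove uniqueness separately by a Gronwall argument.
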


\begin{proof}
From \eqref{2.1}, it is easy to get
\bes
d_2,b_2\in[\rho,\sigma],\ a_2\in\left[0, \, \rho\sigma/3\right].\label{2.7}
\ees
Since $\tilde v_0(x):=\frac{R}{\sigma}v_0(x)$, by \eqref{1.3a} and \eqref{2.1}, we have
\bes
\tilde v_0\in C^{2+\alpha}(\bar\Omega),\ \dd\frac{\partial\tilde v_0}{\partial\nu}\Big|_{\partial\Omega}=0,\ \tilde v_0\ge0,\ \ \|\tilde v_0\|_{2+\alpha,\,\bar\Omega}\le R.\label{2.7a}
\ees

{\it Step 1: Existence, uniqueness and boundedness}. Thanks to $\tilde v(x,0)=\tilde v_0(x)\ge0$ for $x\in\bar\Omega$, it is easy to see that $\underline{v}(x,t)\equiv0$ is a lower solution of \eqref{3.1a}.

Let $\bar v(x,t)\equiv R$ for $(x,t)\in\overline Q_T$. It follows from \eqref{2.4}, \eqref{2.5} and \eqref{2.7} that
 \[\kk(\frac{\sigma\xi}{R}\Delta u+ a_2-\frac{\sigma u}{R}-\frac{\sigma b_2}{R}\bar v\rr)\bar v\le\kk(\frac{\sigma}{3}|\tr u|+\frac{\sigma\rho}{3}-\sigma\rho\rr)R\le0\ \ \ {\rm in}\ \ \ol Q_T.\]
Moreover, $\tilde v_0\le R$ due to \eqref{2.7a}. Hence, $\bar v$ is an upper solution of \eqref{3.1a}. Making use of the upper and lower solutions method, one can easily obtain the existence and uniqueness of classical solution to \eqref{3.1a}. Hence, the problem \eqref{3.1a} has a unique solution $\tilde v\in C^{2+\alpha,1+\alpha/2}(\overline Q_T)$ and
\bes
0\le\tilde v\le R\ \  {\rm on}\ \ol Q_T. \label{2.8}
\ees

{\it Step 2: The regularity}. For the convenience, we let
$$r(x,t):=\frac{\sigma\xi}{R}\Delta u+ a_2-\frac{\sigma }{R}u-\frac{\sigma b_2}{R}\tilde v.$$
By \eqref{2.4}, \eqref{2.5}, \eqref{2.7} and \eqref{2.8}, we find
\bess
\max\kk\{d_2, \ \frac{\sigma\xi}{R}|\nabla u|_{0,\ol Q_T}, \ |r|_{0,\ol Q_T}\rr\}\le \sigma+\rho\sigma+2\sigma^2=h_1(\rho,\sigma)\ \ {\rm and}\ \ d_2\ge \rho.
\eess
This combined with \eqref{2.8} and \eqref{2.7a} enables us to apply Lemma \ref{l1.1}\,(i) to \eqref{3.1a} to get
 \bes
|\tilde v|_{\alpha,\,\ol Q_T}&\le& \mathcal K(\alpha,h_1(\rho,\sigma),\rho)\kk(\|\tilde v_0\|_{C^2(\bar\Omega)}+|\tilde v|_{0,\ol Q_T}\rr)\nonumber\\[1mm]
 &\le&2\mathcal K(\alpha,h_1(\rho,\sigma),\rho)R\nonumber\\[1mm]
 &\le&PR,\label{2.8a}
  \ees
where we used \eqref{2.2d} in the last step. This shows the second estimate of \eqref{3.2}.

Next, we shall prove the last inequality of \eqref{3.2}. Rewriting \eqref{3.1a} as
\bes
\left\{\begin{array}{lll}
 \tilde v_t-d_2\dd\Delta \tilde v-\frac{\sigma\xi}{R}\nabla u\cdot\nabla\tilde v-\kk(\frac{\sigma\xi}{R}\Delta u+a_2\rr)\tilde v=\kk(-\frac{\sigma}{R}u-\frac{\sigma b_2}{R}\tilde v\rr)\tilde v,\;\;&x\in\Omega,\ t\in(0,T],\\[1mm]
\dd\frac{\partial\tilde v}{\partial\nu}=0,&x\in\partial\Omega,\ t\in[0,T],\\[2mm]
 \tilde v(x,0)=\tilde v_0(x), &x\in\bar\Omega.
 \end{array}\right.\qquad\label{2.9}
 \ees
Making use of \eqref{2.4}, \eqref{2.5} and \eqref{2.7}, it follows that
 \bes
d_2\ge \rho\ \ {\rm and}\ \ \max\kk\{d_2, \ \frac{\sigma\xi}{R}|\nabla u|_{\alpha,\ol Q_T}, \ \kk|\frac{\sigma\xi}{R}\Delta u+a_2\rr|_{\alpha,\,\ol Q_T}\rr\}\le \sigma+\rho\sigma=:h_2(\rho,\sigma).
 \label{3.7}\ees
Noting that
 \bes
|fg|_{\alpha,\,\ol Q_T}\le|f|_{0,\ol Q_T}|g|_{0,\ol Q_T}+|f|_{0,\ol Q_T}|g|_{\alpha,\,\ol Q_T}+|f|_{\alpha,\ol Q_T}|g|_{0,\,\ol Q_T}\label{3.7a}
 \ees
holds for all $f,g\in C^{\alpha,\alpha/2}(\overline Q_T)$. In view of \eqref{3.7a}, \eqref{2.8a}, and \eqref{2.5}, we have
 \bes
\kk|\kk(-\frac{\sigma u}{R}-\frac{\sigma b_2\tilde v}{R}\rr)\tilde v\rr|_{\alpha,\,\ol Q_T}\le 2\sigma^2R(1+2P).\label{3.8}
 \ees
Recalling that $\|\tilde v_0\|_{2+\alpha,\,\bar\Omega}\le R$ due to \eqref{2.7a}. Based on \eqref{3.7} and \eqref{3.8}, using Lemma \ref{l1.1}\,(ii) to \eqref{2.9}, we find
 \bess
|\tilde v|_{2+\alpha,\,\ol Q_T}&\le&\mathcal L(\alpha,h_2(\rho,\sigma),\rho)\kk(\kk|\kk(-\frac{\sigma u}{R}-\frac{\sigma b_2\tilde v}{R}\rr)\tilde v\rr|_{\alpha,\,\ol Q_T}+\|\tilde v_0\|_{2+\alpha,\,\bar\Omega}+|\tilde v|_{0,\ol Q_T}\rr)\nonumber\\[1mm]
&\le&\mathcal L(\alpha,h_2(\rho,\sigma),\rho)(2\sigma^2(1+2P)+2)R\nonumber\\[1mm]
&=&(2\sigma^2(1+2P)+2)\mathcal L_2R\nonumber\\[1mm]
&\le&\rho.
\eess
Here we used \eqref{2.2d} in the last deduction. The proof is end.
\end{proof}

The following lemma provides the a priori estimates for $u$ when $\chi$ is small enough and $v$ is given and satisfies \eqref{3.2}.

\begin{lem}\label{l3.2}\, Let $T\in[1,\infty),\alpha\in(0,1)$, and $\rho,\sigma,P,R$ be given by \eqref{2.1} and \eqref{2.2d}. Assume that
\bes
0< \chi\le{\sigma R}/{3} \label{2.14}
\ees
and $v\in C^{2+\alpha,1+\alpha/2}(\overline Q_T)$ with
\bes
0\le v\le R,\ |v|_{\alpha,\,\ol Q_T}\le PR,\  |v|_{2+\alpha,\,\ol Q_T}\le \rho.\label{2.15}
\ees
Then the problem
 \bes
\left\{\begin{array}{lll}
 \tilde u_t-d_1\Delta\dd\tilde u+\frac{\sigma\chi}{R}\nabla v\cdot\nabla\tilde u-\kk(-\frac{\sigma\chi}{R}\Delta v-a_1+\frac{\sigma c_1}{R}v-\frac{\sigma b_1}{R}\tilde u\rr)\tilde u=0,\ &x\in\Omega,\ \ 0<t\le T,\\[2mm]
 \dd\frac{\partial\tilde u}{\partial\nu}=0,&x\in\partial\Omega, \ \ 0\le t\le T,\\[2mm]
 \tilde u(x,0)=\tilde u_0(x):=\dd\frac{R}{\sigma}u_0,&x\in\bar\Omega
 \end{array}\right.\qquad\label{3.9a}
 \ees
has a unique solution $\tilde u\in C^{2+\alpha,1+\alpha/2}(\overline Q_T)$ which satisfies
 \bes
 0\le\tilde u\le\sigma R,\ \ |\tilde u|_{\alpha,\,\ol Q_T}\le \sigma PR,\ \ |\tilde u|_{2+\alpha,\,\ol Q_T}\le \rho.\label{3.10}
 \ees
\end{lem}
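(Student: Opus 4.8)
The plan is to repeat, with the obvious dualization, the three-step argument that proved Lemma~\ref{l3.1}, the given component now being $v$ and the unknown $\tilde u$. As preliminaries I record from \eqref{2.1} that $d_1,b_1\in[\rho,\sigma]$, $a_1\in[0,\sigma]$ and $c_1\le\rho\sigma/3$, together with the analogue of \eqref{2.7a}: the datum $\tilde u_0=\frac R\sigma u_0$ is nonnegative, satisfies $\partial_\nu\tilde u_0=0$ on $\partial\Omega$, and obeys $\|\tilde u_0\|_{2+\alpha,\bar\Omega}=\frac R\sigma\|u_0\|_{2+\alpha,\bar\Omega}\le\sigma R$, since \eqref{2.1} forces $\|u_0\|_{2+\alpha,\bar\Omega}\le\sigma^2$ (this is where the square root in the definition of $\sigma$ is used).

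For existence, uniqueness and boundedness I use the method of sub/supersolutions with $\underline u\equiv0$ and $\bar u\equiv\sigma R$. The lower solution is immediate because the zeroth-order term multiplies $\tilde u$ and $\tilde u_0\ge0$. Since $\bar u$ is constant, verifying the supersolution property reduces to checking that the reaction coefficient is nonpositive, i.e.
\[
-\frac{\sigma\chi}{R}\Delta v-a_1+\frac{\sigma c_1}{R}v-\frac{\sigma b_1}{R}\bar u\le0\quad\text{on }\ol Q_T .
\]
Invoking \eqref{2.14}, \eqref{2.15}, $v\le R$, $c_1\le\rho\sigma/3$, $b_1\ge\rho$ and $-a_1\le0$, the left-hand side is at most $\frac{\rho\sigma^2}{3}+\frac{\rho\sigma^2}{3}-\rho\sigma^2=-\frac{\rho\sigma^2}{3}\le0$; combined with $\bar u(\cdot,0)=\sigma R\ge\tilde u_0$ this makes $\bar u$ a supersolution, so the method yields a unique classical solution with $0\le\tilde u\le\sigma R$, the first estimate in \eqref{3.10}.

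The regularity is obtained exactly as in Lemma~\ref{l3.1}. Reading \eqref{3.9a} as a linear equation with diffusion $d_1\ge\rho$, drift $\frac{\sigma\chi}{R}\nabla v$ and zeroth-order coefficient $-\frac{\sigma\chi}{R}\Delta v-a_1+\frac{\sigma c_1}{R}v-\frac{\sigma b_1}{R}\tilde u$, the sup-norms of all coefficients are bounded by $h_3=\sigma(1+\rho\sigma+\sigma^2)$, so Lemma~\ref{l1.1}(i) gives $|\tilde u|_{\alpha,\ol Q_T}\le2\mathcal K(\alpha,h_3,\rho)\sigma R\le\sigma PR$ by \eqref{2.2d}. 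For the Schauder bound I mimic the passage to \eqref{2.9}: I retain only $-\frac{\sigma\chi}{R}\Delta v-a_1$ in the zeroth-order coefficient, whose $C^\alpha$-norm (like those of $d_1$ and $\frac{\sigma\chi}{R}\nabla v$) is $\le h_4=\sigma(1+\rho\sigma)$, and transfer the two terms carrying a factor $P$ or the unknown, namely $\big(\frac{\sigma c_1}{R}v-\frac{\sigma b_1}{R}\tilde u\big)\tilde u$, to the right-hand side. By the product inequality \eqref{3.7a} together with $|v|_0\le R$, $|v|_\alpha\le PR$, $|\tilde u|_0\le\sigma R$, $|\tilde u|_\alpha\le\sigma PR$, $c_1\le\rho\sigma/3$ and $b_1\le\sigma$, this right-hand side has $C^\alpha$-norm at most $R(1+2P)\big(\tfrac{\rho\sigma^3}{3}+\sigma^4\big)\le R(1+2P)\sigma^3(\rho+\sigma)$, and Lemma~\ref{l1.1}(ii) then yields
\[
|\tilde u|_{2+\alpha,\ol Q_T}\le\mathcal L_4\,R\big[\sigma^3(\rho+\sigma)(1+2P)+2\sigma\big]\le\rho ,
\]
the final inequality being precisely the third bound on $R$ in \eqref{2.2d}.

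The only delicate point is the splitting in the Schauder step: the piece kept inside the operator must have $C^\alpha$-norm controlled independently of $P$ (so as to stay below $h_4$), whereas the piece moved to the forcing term $f$—although it carries the large factors $P$, $\sigma^4$ and the self-interaction $\tilde u^2$—is multiplied by $R$, and the definition of $R$ in \eqref{2.2d} is calibrated exactly so that $\mathcal L_4R[\sigma^3(\rho+\sigma)(1+2P)+2\sigma]\le\rho$. Matching the constants to \eqref{2.2d}, rather than any analytic difficulty, is the real work; everything else is the verbatim dual of Lemma~\ref{l3.1}.
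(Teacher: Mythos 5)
Your proposal is correct and takes essentially the same route as the paper's own proof: the identical sub/supersolution pair $\underline u\equiv0$, $\bar u\equiv\sigma R$ (with the same computation giving the reaction coefficient $\le-\rho\sigma^2/3$), Lemma \ref{l1.1}\,(i) with the constant $h_3$ to get $|\tilde u|_{\alpha,\,\ol Q_T}\le\sigma PR$, and the same rewriting \eqref{2.23} keeping only $-\frac{\sigma\chi}{R}\Delta v-a_1$ (norm $\le h_4$) inside the operator while estimating the quadratic forcing by $\sigma^3R(\rho+\sigma)(1+2P)$ via \eqref{3.7a}, so that the third bound on $R$ in \eqref{2.2d} closes the Schauder estimate. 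All constants match the paper, and your remark about where $\|u_0\|_{2+\alpha,\,\bar\Omega}^{1/2}\le\sigma$ is used correctly reproduces \eqref{2.18a}.
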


\begin{proof}\, The proof is similar to that of Lemma \ref{l3.1}. It follows from \eqref{2.1} that
\bes
d_1,b_1\in[\rho,\sigma],\ a_1\in[0,\sigma], \ c_1\in\left[0,\,{\rho\sigma}/{3}\right].\label{2.18}
\ees
By \eqref{1.3a} and \eqref{2.1}, we have
\bes
\tilde u_0\in C^{2+\alpha}(\bar\Omega),\ \dd\frac{\partial\tilde u_0}{\partial\nu}\Big|_{\partial\Omega}=0,\ \tilde u_0\ge0,\ \ \|\tilde u_0\|_{2+\alpha,\,\bar\Omega}\le \sigma R.\label{2.18a}
\ees
{\it Step 1: Existence, uniqueness and boundedness.} Due to $\tilde u_0\ge0$ on $\bar\Omega$, it is easily seem that $\underline{u}\equiv0$ is the lower solution of \eqref{3.9a}.

Let $\bar u(x,t)\equiv \sigma R$ in $\ol Q_T$. From \eqref{2.14}, \eqref{2.15} and \eqref{2.18}, we find
\bess
\kk(-\frac{\sigma\chi}{R}\Delta v-a_1+\frac{\sigma c_1}{R}v-\frac{\sigma b_1}{R}\bar u\rr)\bar u
\le\kk(\frac{\sigma^2}{3}|\tr v|+\frac{\rho\sigma^2}{3}-\rho\sigma^2\rr)\sigma R\le 0\ \ \ {\rm in}\;\; Q_T.
\eess
Hence, it is easy to verify that $\bar u$ is an upper solution of \eqref{3.9a}. The upper and lower solutions method shows that problem \eqref{3.9a} admits a unique solution $\tilde u\in C^{2+\alpha,1+\alpha/2}(\overline Q_T)$ which satisfies
\bes
0\le\tilde u\le\sigma R.\label{2.19}
\ees
This establishes the first inequality in \eqref{3.10}.

{\it Step 2: The regularity \eqref{3.10}.} For the convenience, let
\[g(x,t)=-\frac{\sigma\chi}{R}\Delta v-a_1+\frac{\sigma c_1}{R}v-\frac{\sigma b_1}{R}\tilde u.\]
Thanks to \eqref{2.14}, \eqref{2.15}, \eqref{2.18} and \eqref{2.19}, we have
  \bes
d_1\ge\rho\ \ {\rm and}\ \ \max\kk\{d_1,\ \frac{\sigma\chi}{R}|\nabla v|_{0,\ol Q_T},\ |g|_{0,\ol Q_T}\rr\}\le \sigma+\sigma^2\rho+\sigma^3=h_3(\rho,\sigma). \label{2.20}
  \ees
We then use Lemma \ref{l1.1}\,(i) to infer that
\bes
|\tilde u|_{\alpha,\,\ol Q_T}&\le&\mathcal{K}(\alpha,h_3(\rho,\sigma),\rho)(\|\tilde u_0\|_{2,\bar\Omega}+|\tilde u|_{0,\ol Q_T})\nonumber\\[1mm]
&\le&2\mathcal{K}(\alpha,h_3(\rho,\sigma),\rho)\sigma R\nonumber\\[1mm]
&\le&\sigma PR,\label{2.22}
\ees
where we have used \eqref{2.20}, \eqref{2.18a}, \eqref{2.19} and \eqref{2.2d}. This proves the second estimate of \eqref{3.10}.

It remains to show the last inequality of \eqref{3.10}. To achieve this, we need to rewrite \eqref{3.9a} as
 \bes
\left\{\begin{array}{lll}
 \tilde u_t-d_1\Delta\dd\tilde u+\frac{\sigma\chi}{R}\nabla v\cdot\nabla\tilde u-\kk(-\frac{\sigma\chi}{R}\Delta v-a_1\rr)\tilde u=\kk(\frac{\sigma c_1}{R}v-\frac{\sigma b_1}{R}\tilde u\rr)\tilde u,\;\;&x\in\Omega,\ \ 0<t\le T,\\[2mm]
 \dd\frac{\partial\tilde u}{\partial\nu}=0,&x\in\partial\Omega, \ \ 0\le t\le T,\\[2mm]
 \tilde u(x,0)=\tilde u_0(x),&x\in\bar\Omega
 \end{array}\right.\qquad\label{2.23}
 \ees
Making use of \eqref{2.14}, \eqref{2.15} and \eqref{2.18}, we have
  \bess
d_1\ge\rho\ \ {\rm and}\ \ \max\kk\{ d_1,\  \frac{\sigma\chi}{R}|\nabla v|_{\alpha,\,\ol Q_T},\ \kk|-\frac{\sigma\chi}{R}\Delta v-a_1\rr|_{\alpha,\,\ol Q_T}\rr\}\le \sigma+\rho\sigma^2= h_4(\rho,\sigma).
  \eess
This enables us to apply Lemma \ref{l1.1}\,(ii) to \eqref{2.23} to derive
 \bes
|\tilde u|_{2+\alpha,\,\ol Q_T}\le \mathcal{L}(\alpha,h_4(\rho,\sigma),\rho)\left(\kk|\kk(\frac{\sigma c_1}{R}v-\frac{\sigma b_1}{R}\tilde u\rr)\tilde u\rr|_{\alpha,\,\ol Q_T}+\|\tilde u_0\|_{2+\alpha,\,\bar\Omega}+|\tilde u|_{0,\ol Q_T}\right).\label{3.10a}
 \ees
Similar to the derivation of \eqref{3.8}, by using \eqref{3.7a}, \eqref{2.18}, \eqref{2.15}, \eqref{2.19} and \eqref{2.22}, one can obtain
 \[\kk|\kk(\frac{\sigma c_1}{R}v-\frac{\sigma b_1}{R}\tilde u\rr)\tilde u\rr|_{\alpha,\,\ol Q_T}\le\sigma^3R(\rho+\sigma)(1+2P).\]
This combined with \eqref{3.10a}, \eqref{2.18a}, \eqref{2.19} and \eqref{2.2d} implies
\bess
|\tilde u|_{2+\alpha,\,\ol Q_T}&\le&\mathcal{L}(\alpha,h_4(\rho,\sigma),\rho)\kk(\kk|\kk(\frac{\sigma c_1}{R}v-\frac{\sigma b_1}{R}\tilde u\rr)\tilde u\rr|_{\alpha,\,\ol Q_T}+\|\tilde u_0\|_{2+\alpha,\,\bar\Omega}+|\tilde u|_{0,\ol Q_T}\rr)\\[1mm]
&\le&\mathcal{L}(\alpha,h_4(\rho,\sigma),\rho)[\sigma^3(\rho+\sigma)(1+2P)+2\sigma]R\\[1mm]
&=&[\sigma^3(\rho+\sigma)(1+2P)+2\sigma]\mathcal{L}_4R\\[1mm]
&\le&\rho.
\eess
This gives the last estimation of \eqref{3.10} and hence completes the proof.
\end{proof}

In what follows, we shall consider
\bes
 \left\{\begin{array}{lll}
 u_t=d_1\Delta u-\dd\frac{\sigma\chi}{R}\nabla\cdot(u \nabla v)+\kk(-a_1+\frac{\sigma c_1}{R} v-\frac{\sigma b_1}{R} u\rr) u,&x\in\Omega,\ \ t\in(0,T],\\[3mm]
  v_t=d_2\Delta v+\dd\frac{\sigma\xi}{R}\nabla\cdot (v\nabla u)+\kk(a_2-\frac{\sigma }{R}u-\frac{\sigma b_2 }{R}v\rr) v,&x\in\Omega,\ \ t\in(0,T],\\[3mm]
 \dd\frac{\partial u}{\partial\nu}=\dd\frac{\partial v}{\partial\nu}=0,\ \ &x\in\partial\Omega, \ \ t\in[0,T],\\[3mm]
 u(x,0)=\dd\frac{R}{\sigma}u_0(x),\ v(x,0)=\frac{R}{\sigma}v_0(x), &x\in\bar\Omega.
 \end{array}\right.\qquad\label{3.14}
 \ees
\begin{lem}\label{l3.3}\, Let $T\in[1,\infty),\alpha\in(0,1)$, and $\rho,\sigma,P,R$ be given by \eqref{2.1} and \eqref{2.2d}. Assume that
\bess
\chi\in(0,{\sigma R}/{3}]\ \ \ {\rm and}\ \ \xi\in(0,{R}/{3}].
\eess
Then there exists $(\tilde u,\tilde v)\in( C^{2+\alpha,1+\alpha/2}(\overline Q_T))^2$ which solves \eqref{3.14}. And $(\tilde u,\tilde v)$ satisfies
  \bess
0\le \tilde u\le \sigma R, \ \ |\tilde u|_{\alpha,\,\ol Q_T}\le \sigma PR, \ \ |\tilde u|_{2+\alpha,\,\ol Q_T}\le \rho,
\eess
and
  \bess
0\le \tilde v\le R, \ \  |\tilde v|_{\alpha,\,\ol Q_T}\le PR, \ \ |\tilde v|_{2+\alpha,\,\ol Q_T}\le \rho.
\eess
\end{lem}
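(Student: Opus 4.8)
The plan is to realize a solution of the coupled system \eqref{3.14} as a fixed point of the composition of the two solution operators supplied by Lemmas \ref{l3.1} and \ref{l3.2}, and to invoke the Schauder fixed point theorem. Note first that the hypotheses $\chi\in(0,\sigma R/3]$ and $\xi\in(0,R/3]$ are precisely the smallness conditions \eqref{2.14} and \eqref{2.4}, so both lemmas are available. Fix $\beta\in(0,\alpha)$ and set
\[
\mathcal S=\left\{u\in C^{2+\alpha,1+\alpha/2}(\ol Q_T):\ 0\le u\le\sigma R,\ |u|_{\alpha,\,\ol Q_T}\le\sigma PR,\ |u|_{2+\alpha,\,\ol Q_T}\le\rho\right\},
\]
viewed as a subset of the Banach space $X=C^{2+\beta,1+\beta/2}(\ol Q_T)$. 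The defining bounds of $\mathcal S$ are exactly the hypotheses \eqref{2.5} of Lemma \ref{l3.1}, so for each $u\in\mathcal S$ that lemma yields a unique $\tilde v=:\mathcal F_1(u)$ solving \eqref{3.1a} and satisfying \eqref{3.2}; since \eqref{3.2} is identical to the hypotheses \eqref{2.15} of Lemma \ref{l3.2}, the latter yields a unique $\tilde u=:\mathcal F_2(\tilde v)$ solving \eqref{3.9a} and satisfying \eqref{3.10}. As \eqref{3.10} is precisely the list of bounds defining $\mathcal S$, the map $\mathcal F:=\mathcal F_2\circ\mathcal F_1$ sends $\mathcal S$ into itself.

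First I would verify the topological hypotheses of Schauder's theorem. The set $\mathcal S$ is convex and nonempty (it contains $u\equiv0$). By the compactness of the embedding $C^{2+\alpha,1+\alpha/2}(\ol Q_T)\hookrightarrow X$ it is precompact in $X$; and since the pointwise bounds are preserved under uniform convergence while the $C^{\alpha}$- and $C^{2+\alpha}$-seminorm bounds are lower semicontinuous along sequences converging in $X$, the set $\mathcal S$ is closed in $X$. Hence $\mathcal S$ is a compact convex subset of $X$.

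The crux of the argument, and the step I expect to be the main obstacle, is the continuity of $\mathcal F$ on $\mathcal S$ in the topology of $X$; it suffices to show $\mathcal F_1$ and $\mathcal F_2$ are continuous. Suppose $u_n\to u$ in $X$ with $u_n,u\in\mathcal S$, and put $\tilde v_n=\mathcal F_1(u_n)$. By \eqref{3.2} the sequence $\{\tilde v_n\}$ is bounded in $C^{2+\alpha,1+\alpha/2}(\ol Q_T)$, hence precompact in $X$. Given any subsequence, extract a further subsequence $\tilde v_{n_k}\to w$ in $X$; since $u_{n_k}\to u$ in $X$ forces $\nabla u_{n_k}\to\nabla u$ and $\Delta u_{n_k}\to\Delta u$ uniformly, every coefficient in \eqref{3.1a} converges uniformly, so passing to the limit shows that $w$ solves the problem \eqref{3.1a} associated with $u$. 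The uniqueness assertion of Lemma \ref{l3.1} gives $w=\mathcal F_1(u)$; as the limit is independent of the chosen subsequence, the full sequence satisfies $\tilde v_n\to\mathcal F_1(u)$ in $X$. The same reasoning with Lemma \ref{l3.2} gives continuity of $\mathcal F_2$, and hence of $\mathcal F$.

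Schauder's fixed point theorem then produces $u^*\in\mathcal S$ with $\mathcal F(u^*)=u^*$. Setting $v^*=\mathcal F_1(u^*)$ gives $u^*=\mathcal F_2(v^*)$ and $v^*=\mathcal F_1(u^*)$, so the pair $(u^*,v^*)\in(C^{2+\alpha,1+\alpha/2}(\ol Q_T))^2$ solves \eqref{3.1a} and \eqref{3.9a} simultaneously. It remains to check that $(u^*,v^*)$ solves \eqref{3.14}: expanding the divergence terms via $\nabla\cdot(v\nabla u)=\nabla u\cdot\nabla v+v\Delta u$ and $\nabla\cdot(u\nabla v)=\nabla u\cdot\nabla v+u\Delta v$ and regrouping the first-order and zeroth-order terms turns the two equations of \eqref{3.14} into exactly \eqref{3.1a} (with $\tilde v=v^*$) and \eqref{3.9a} (with $\tilde u=u^*$). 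Thus $(u^*,v^*)$ is the desired solution, and the asserted bounds on $\tilde u$ follow from $u^*\in\mathcal S$, while those on $\tilde v$ follow from $v^*=\mathcal F_1(u^*)$ together with \eqref{3.2}.
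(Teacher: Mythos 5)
Your argument is correct, and its skeleton coincides with the paper's: both proofs apply the Schauder fixed point theorem on the compact convex set cut out by the a priori bounds of Lemmas \ref{l3.1} and \ref{l3.2}, placed inside a H\"older space of strictly lower exponent, and both recover \eqref{3.14} from the fixed point by expanding the divergence terms exactly as you do. There are, however, two genuine differences. First, the paper runs the fixed point argument on \emph{pairs}: it defines $\mathcal F(u,v)=(N(v),M(u))$ on a set $\Sigma$ of pairs inside $W=[C^{2+\alpha/2,1+\alpha/4}(\overline Q_T)]^2$, whereas you iterate the composition $\mathcal F_2\circ\mathcal F_1$ on the single component $u$ and recover $v^*=\mathcal F_1(u^*)$ afterwards; both translations of ``fixed point'' into ``solution of \eqref{3.14}'' are valid, and your reduction to a scalar fixed point problem is marginally more economical. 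Second, and more substantively, the continuity of the solution map is proved differently: the paper writes down the linear difference equations satisfied by $\tilde v_1-\tilde v_2$ and $\tilde u_1-\tilde u_2$ and invokes parabolic Schauder theory (\cite[Theorem IV.5.3]{L-S-Y1968}) to obtain a quantitative \emph{Lipschitz} estimate $\|\mathcal F(u_1,v_1)-\mathcal F(u_2,v_2)\|_W\le C\|(u_1,v_1)-(u_2,v_2)\|_W$, while you use a soft compactness-plus-uniqueness argument (uniform $C^{2+\alpha,1+\alpha/2}$ bounds give precompactness in $C^{2+\beta,1+\beta/2}$, every subsequential limit solves the limiting problem, and uniqueness identifies the limit, so the full sequence converges). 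Your route avoids all the constant-chasing with $C_1,\dots,C_6$ but yields only continuity rather than a Lipschitz bound, which is all Schauder's theorem needs here. One point you should make explicit: to invoke the uniqueness assertion of Lemma \ref{l3.1} at the limit, you need the subsequential limit $w$ to lie in the uniqueness class $C^{2+\alpha,1+\alpha/2}(\overline Q_T)$; this follows from the lower semicontinuity of the $|\cdot|_{2+\alpha,\,\overline Q_T}$ seminorm under convergence in $X$ --- the same observation you already used to show $\mathcal S$ is closed --- so the gap is cosmetic, but the sentence belongs at that step of the proof rather than only in the closedness argument.
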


\begin{proof}\, We first define
 \bess
\Sigma=\left\{u,v\in C^{2+\alpha,1+\alpha/2}(\overline Q_T):\,\begin{array}{ll} 0\le u\le \sigma R, \ 0\le v\le R, \ |u|_{\alpha,\,\ol Q_T}\le \sigma PR,\\[1mm]
 |v|_{\alpha,\,\ol Q_T}\le PR,\ \max\{|u|_{2+\alpha,\,\ol Q_T},\,|v|_{2+\alpha,\,\ol Q_T}\}\le \rho\end{array}\right\}
 \eess
Define
  \bess
  W=[C^{2+\alpha/2,1+\alpha/4}(\overline Q_T)]^2.
   \eess
It is easy to see that $W$ is a Banach space endowed with norm
\[\|(u,v)\|_W=|u|_{2+\alpha/2,\,\ol Q_T}+|v|_{2+\alpha/2,\,\ol Q_T}\ \ \ {\rm for}\ (u,v)\in W.\]
Moreover, $\Sigma$ is a compact convex subset of $W$.

For the given $(u,v)\in\Sigma$, let $\tilde v=M(u)$ be the unique solution of \eqref{3.1a} obtained by Lemma \ref{l3.1}, and $\tilde u=N(v)$ be the solution of \eqref{3.9a} given by Lemma \ref{l3.2}. Set $\mathcal{F}(u,v)=(\tilde u, \tilde v)$. It follows from Lemma \ref{l3.1} and Lemma \ref{l3.2} that $\tilde u,\tilde v\in C^{2+\alpha,1+\alpha/2}(\overline Q_T)$ and
\[0\le \tilde u\le \sigma R,\ \ \ |\tilde u|_{\alpha,\,\ol Q_T}\le \sigma PR,\ \ \ |\tilde u|_{2+\alpha,\,\ol Q_T}\le \rho,\]
and
\bes
0\le \tilde v\le R,\ \ \ |\tilde v|_{\alpha,\,\ol Q_T}\le PR,\ \ \ |\tilde v|_{2+\alpha,\,\ol Q_T}\le \rho.\label{3.11}
\ees
 Hence, $\mathcal{F}$ maps from $\Sigma$ into itself.

In order to apply the Schauder fixed point theorem, we will prove that $\mathcal{F}$ is continuous in the norm $\|\cdot\|_W$ of the Banach space $W$. For $(u_i,v_i)\in\Sigma$, $i=1,2$, let
\[\tilde v_i=M(u_i), \ \ \tilde u_i=N(v_i), \ \ u=u_1-u_2, \ \ v=v_1-v_2, \ \ \tilde u=\tilde u_1-\tilde u_2, \ \ \tilde v=\tilde v_1-\tilde v_2.\]
Clearly, $\tilde v=\tilde v_1-\tilde v_2$ satisfies
\bess
\left\{\begin{array}{lll}
 \tilde v_t-d_2\Delta \tilde v-\dd\frac{\sigma\xi}{R}\nabla u_1\cdot\nabla\tilde v-\kk(\frac{\sigma\xi}{R}\tr u_1+a_2-\frac{\sigma}{R}u_1-\frac{\sigma b_2}{R}(\tilde v_1+\tilde v_2)\rr)\tilde v\\[3mm]
\qquad =\dd\frac{\sigma\xi}{R}\nn \tilde v_2\cdot\nn u+\frac{\sigma\xi}{R}\tilde v_2\tr u-\frac{\sigma}{R} \tilde v_2u,\hspace{4mm}x\in\Omega,\ t\in(0,T],\\[3mm]
\dd\frac{\partial\tilde v}{\partial\nu}=0,\hspace{57mm}x\in\partial\Omega,\ t\in[0,T],\\[3mm]
 \tilde v(x,0)=0,\hspace{52mm}x\in\bar\Omega.
 \end{array}\right.
 \eess
It is clear that $|\varphi|_{\alpha/2,\,\ol Q_T}\le 3|\varphi|_{\alpha,\,\ol Q_T}$ for any $\varphi\in C^{\alpha,\alpha/2}(\overline Q_T)$. Since $\tilde v_i$ satisfy \eqref{3.11} for $i=1,2$ and $(u_1,v_1)\in \Sigma$, we have
\bess
|u_1|_{2+\alpha,\,\ol Q_T}\le \rho,\ \ |\tilde v_i|_{\alpha,\,\ol Q_T}\le PR\ \ {\rm for}\ i=1,2.
\eess
And hence there is $C_1>0$ such that
  \[\frac{\sigma\xi}{R}|\nabla u_1|_{\alpha/2,\,\ol Q_T}, \ \kk|\frac{\sigma\xi}{R}\tr u_1+a_2-\frac{\sigma}{R}u_1-\frac{\sigma b_2}{R}(\tilde v_1+\tilde v_2)\rr|_{\alpha/2,\,\ol Q_T}\le C_1.\]
Similarly, one can find $C_2>0$ such that
\[\kk|\frac{\sigma\xi}{R}\nn \tilde v_2\cdot\nn u+\frac{\sigma\xi}{R}\tilde v_2\tr u-\frac{\sigma}{R} \tilde v_2u\rr|_{\alpha/2,\,\ol Q_T}\le C_2|u|_{2+\alpha/2,\,\ol Q_T}.\]
In view of the parabolic Schauder theory (cf. \cite[Theorem IV.5.3]{L-S-Y1968}), there is $C_3>0$ which depends on $\alpha,T,\Omega,C_1$ such that
\bes
|\tilde v|_{2+\alpha/2,\,\ol Q_T}&\le& C_3\kk|\frac{\sigma\xi}{R}\nn \tilde v_2\cdot\nn u+\frac{\sigma\xi}{R}\tilde v_2\tr u-\frac{\sigma}{R} \tilde v_2u\rr|_{\alpha/2,\,\ol Q_T}\nonumber\\[1mm]
&\le&C_2C_3|u|_{2+\alpha/2,\,\ol Q_T}.\label{3.13}
\ees

We next estimate $\tilde u$. It is easy to see that $\tilde u=\tilde u_1-\tilde u_2$ satisfies
\bess
\left\{\begin{array}{lll}
 \tilde u_t-d_1\Delta\tilde u+\dd\frac{\sigma\chi}{R}\nabla v_1\cdot\nabla\tilde u+\kk(\frac{\sigma \chi}{R}\tr v_1+a_1-\frac{\sigma c_1}{R}v_1+\frac{\sigma b_1}{R}(\tilde u_1+\tilde u_2)\rr)\tilde u\\[3mm]
\qquad=-\dd\frac{\sigma\chi}{R}\nn\tilde u_2\cdot\nn v-\frac{\sigma\chi}{R}\tilde u_2\tr v+\frac{\sigma c_1}{R}\tilde u_2v,\hspace{4mm}x\in\Omega,\ \ 0<t\le T,\\[3mm]
 \dd\frac{\partial\tilde u}{\partial\nu}=0,\hspace{64mm}x\in\partial\Omega, \ \ 0\le t\le T,\\[3mm]
 \tilde u(x,0)=0, \hspace{58mm}x\in\bar\Omega.
 \end{array}\right.
 \eess
Similar to the above, there exist $C_4,C_5>0$ such that
\[\frac{\sigma\chi}{R}|\nabla v_1|_{\alpha/2,\,\ol Q_T},\ \ \kk|\frac{\sigma \chi}{R}\tr v_1+a_1-\frac{\sigma c_1}{R}v_1+\frac{\sigma b_1}{R}(\tilde u_1+\tilde u_2)\rr|_{\alpha/2,\,\ol Q_T}\le C_4,\]
and
\[\kk|-\frac{\sigma\chi}{R}\nn\tilde u_2\cdot\nn v-\frac{\sigma\chi}{R}\tilde u_2\tr v+\frac{\sigma c_1}{R}\tilde u_2v\rr|_{\alpha/2,\,\ol Q_T}\le C_5|v|_{2+\alpha/2,\,\ol Q_T}.\]
Again by the parabolic Schauder theory, there is $C_6>0$ such that
\bess
|\tilde u|_{2+\alpha/2,\,\ol Q_T}&\le& C_6|v|_{2+\alpha/2,\,\ol Q_T}.
\eess
This combined with \eqref{3.13} yields that
 \bess
\|\mathcal{F}(u_1,v_1)-\mathcal{F}(u_2,v_2)\|_W
&=&\|(\tilde u,\tilde v)\|_W\\[1mm]
&=&|\tilde u|_{2+\alpha/2,\,\ol Q_T}+|\tilde v|_{2+\alpha/2,\,\ol Q_T}\\[1mm]
&\le&\max\{C_2C_3,C_6\}\left(|u|_{2+\alpha/2,\,\ol Q_T}+|v|_{2+\alpha/2,\,\ol Q_T}\right)\\[1.5mm]
&\le&\max\{C_2C_3,C_6\}\|(u_1,v_1)-(u_2,v_2)\|_W.
 \eess
This shows that $\mathcal{F}$ is continuous in the norm $\|\cdot\|_W$ of the Banach space $W$.

Making use of the Schauder fixed point theorem (cf. \cite[Theorem 11.1]{GilbargT}), there exists $( \hat u, \hat v)\in \Sigma$ such that $\mathcal{F}(\hat u,\hat v)=(\hat u,\hat v)$. Hence,  problem \eqref{3.14} admits a solution $(\hat u,\hat v)$. And the desired estimates follow from the definition of $\Sigma$.
\end{proof}

In view of Lemma \ref{l3.3}, we can prove the existence of solutions of \eqref{1.1}.

\begin{lem}\label{l3.4}\, Let $\alpha\in(0,1)$, and $\rho,\sigma,P,R$ be given by \eqref{2.1} and \eqref{2.2d}. Assume that
\bess
\chi\in(0,{\sigma R}/{3}]\ \ \ {\rm and}\ \ \xi\in(0,{R}/{3}].
\eess
Then there exists a solution $(u,v)\in[C^{2,1}(\overline Q_T)]^2$ solving \eqref{1.1} on $[0,T]$ for any $T\in[1,\infty)$, and $(u,v)$ satisfies
\bes
0\le u\le \sigma^2,\ \ \ |u|_{\alpha,\,\ol Q_T}\le P\sigma^2,\ \ \ |u|_{2+\alpha,\,\ol Q_T}\le {\rho\sigma}/{R}.\label{3.17}
\ees
and
 \bes
0\le v\le \sigma,\ \ \ |v|_{\alpha,\,\ol Q_T}\le P\sigma,\ \ \ |v|_{2+\alpha,\,\ol Q_T}\le {\rho\sigma}/{R},\label{3.16}
 \ees
\end{lem}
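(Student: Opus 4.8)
The plan is to obtain $(u,v)$ as a simple rescaling of the solution $(\tilde u,\tilde v)$ of the auxiliary system \eqref{3.14} furnished by Lemma \ref{l3.3}. First I would invoke Lemma \ref{l3.3} under the stated smallness assumptions $\chi\in(0,\sigma R/3]$ and $\xi\in(0,R/3]$ to produce $(\tilde u,\tilde v)\in[C^{2+\alpha,1+\alpha/2}(\ol Q_T)]^2$ solving \eqref{3.14} on $[0,T]$, together with the bounds $0\le\tilde u\le\sigma R$, $|\tilde u|_{\alpha,\,\ol Q_T}\le\sigma PR$, $|\tilde u|_{2+\alpha,\,\ol Q_T}\le\rho$ and $0\le\tilde v\le R$, $|\tilde v|_{\alpha,\,\ol Q_T}\le PR$, $|\tilde v|_{2+\alpha,\,\ol Q_T}\le\rho$. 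The central observation is that the coefficients in \eqref{3.14} were engineered precisely so that the change of variables
\[
u=\frac{\sigma}{R}\,\tilde u,\qquad v=\frac{\sigma}{R}\,\tilde v
\]
carries \eqref{3.14} back onto the original system \eqref{1.1}.

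I would verify this by direct substitution. The diffusion and reaction terms scale linearly by the factor $\sigma/R$ and reproduce the corresponding linear parts at once; the only point requiring attention is the taxis terms, which are quadratic. Since $\tilde u\,\nabla\tilde v=(R/\sigma)^2\,u\,\nabla v$, the prefactor $\sigma\chi/R$ standing in front of $\nabla\cdot(\tilde u\nabla\tilde v)$ becomes $(\sigma\chi/R)(R/\sigma)^2=R\chi/\sigma$, so after dividing the whole $\tilde u$-equation by $R/\sigma$ one is left with exactly $\chi\,\nabla\cdot(u\nabla v)$; the evasion term $(\sigma\xi/R)\nabla\cdot(\tilde v\nabla\tilde u)$ behaves identically and yields $\xi\,\nabla\cdot(v\nabla u)$. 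Likewise the logistic-type coefficients $\sigma c_1/R,\ \sigma b_1/R,\ \sigma/R,\ \sigma b_2/R$ are exactly the factors that cancel the $R/\sigma$ produced by $\tilde u,\tilde v$ and restore the original $c_1,\,b_1,\,1,\,b_2$.

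Next I would check that the data match. The homogeneous Neumann conditions are preserved under this linear scaling, and the prescribed initial values $\tilde u(\cdot,0)=(R/\sigma)u_0$, $\tilde v(\cdot,0)=(R/\sigma)v_0$ give $u(\cdot,0)=(\sigma/R)(R/\sigma)u_0=u_0$ and $v(\cdot,0)=v_0$, so $(u,v)$ indeed solves \eqref{1.1} on $[0,T]$. Regularity is inherited verbatim, since multiplication by the constant $\sigma/R$ maps $C^{2+\alpha,1+\alpha/2}(\ol Q_T)$ into itself, whence $(u,v)\in[C^{2,1}(\ol Q_T)]^2$.

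Finally, the asserted estimates \eqref{3.17}--\eqref{3.16} follow by multiplying each bound for $(\tilde u,\tilde v)$ by $\sigma/R$: from $0\le\tilde u\le\sigma R$ I obtain $0\le u\le\sigma^2$, from $|\tilde u|_{\alpha,\,\ol Q_T}\le\sigma PR$ I obtain $|u|_{\alpha,\,\ol Q_T}\le P\sigma^2$, and from $|\tilde u|_{2+\alpha,\,\ol Q_T}\le\rho$ I obtain $|u|_{2+\alpha,\,\ol Q_T}\le\rho\sigma/R$, with the analogous computation for $v$. Crucially, these bounds are independent of $T$, being inherited from the $T$-uniform bounds of Lemma \ref{l3.3}. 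There is essentially no analytic obstacle at this stage, as all the work of controlling the taxis coefficients was already carried out in Lemmas \ref{l3.1}--\ref{l3.3}; the only thing demanding care is the bookkeeping of the scaling factor in the quadratic taxis terms, ensuring that the powers of $R/\sigma$ combine to reproduce the unscaled coefficients $\chi$ and $\xi$ exactly rather than some rescaled version of them.
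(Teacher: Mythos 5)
Your proposal is correct and coincides with the paper's own proof: the authors likewise take the solution $(\hat u,\hat v)$ of \eqref{3.14} from Lemma \ref{l3.3} and set $u=\frac{\sigma}{R}\hat u$, $v=\frac{\sigma}{R}\hat v$, noting that the rescaling returns \eqref{1.1} and the estimates \eqref{3.17}--\eqref{3.16}. Your explicit verification of the quadratic taxis terms and the $T$-uniformity of the bounds simply spells out what the paper leaves as ``easy to verify.''
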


\begin{proof}\, Let $T\in[1,\infty)$ and $(\hat u,\hat v)$ be the solution of \eqref{3.14} obtained in Lemma \ref{l3.3}, i.e.,
\bess
 \left\{\begin{array}{lll}
 \hat u_t=d_1\Delta  \hat u-\dd\frac{\sigma\chi}{R}\nabla\cdot(\hat u \nabla \hat v)+\kk(-a_1+\frac{\sigma c_1}{R}\hat v-\frac{\sigma b_1}{R} \hat u\rr) \hat u,\;\;&x\in\Omega,\ \ t\in(0,T],\\[3mm]
  \hat v_t=d_2\Delta\hat v+\dd\frac{\sigma\xi}{R}\nabla\cdot (\hat v\nabla \hat u)+\kk(a_2-\frac{\sigma }{R}\hat u-\frac{\sigma b_2 }{R}\hat v\rr)\hat v,\;\;&x\in\Omega,\ \ t\in(0,T],\\[3mm]
 \dd\frac{\partial \hat u}{\partial\nu}=\dd\frac{\partial \hat v}{\partial\nu}=0,\ \ &x\in\partial\Omega, \ \ t\in[0,T],\\[3mm]
 \hat u(x,0)=\dd\frac{R}{\sigma}u_0(x),\ \hat v(x,0)=\frac{R}{\sigma}v_0(x), &x\in\bar\Omega.
 \end{array}\right.
 \eess
By letting $u=\frac{\sigma}{R}\hat u$ and $v=\frac{\sigma}{R}\hat v$, then it is easy to verify that $(u,v)$ solves \eqref{1.1} on $[0,T]$ and fulfills \eqref{3.17} and \eqref{3.16}.
\end{proof}

\subsection{Uniqueness of solution}
The coming lemma asserts that the solution obtained in Lemma \ref{l3.4} is the unique solution of  \eqref{1.1}.

\begin{lem}\label{l3.5}
Let $\alpha\in(0,1)$, and $\rho,\sigma,P,R$ be given by \eqref{2.1} and \eqref{2.2d}. Assume that
\bes
\chi\in(0,\;{\sigma R}/{3}]\ \ \ {\rm and}\ \ \xi\in(0,\;{R}/{3}].\label{2.32}
\ees
Then there is a unique solution $(u,v)\in[C^{2,1}(\overline Q_T)]^2$ of the problem \eqref{1.1} on $[0,T]$ for $T\in[1,\yy)$.
\end{lem}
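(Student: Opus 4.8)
The plan is to work directly with the original system \eqref{1.1} and to argue by an $L^2$ energy estimate combined with Gronwall's inequality. Let $(u_1,v_1)$ denote the solution already constructed in Lemma \ref{l3.4}, so that in particular $0\le u_1\le\sigma^2$ and $0\le v_1\le\sigma$ on $\overline Q_T$, and let $(u_2,v_2)\in[C^{2,1}(\overline Q_T)]^2$ be an arbitrary solution of \eqref{1.1} on $[0,T]$. Writing $U=u_1-u_2$ and $V=v_1-v_2$, I would first subtract the two copies of \eqref{1.1}. Splitting the taxis terms as $u_1\nabla v_1-u_2\nabla v_2=u_1\nabla V+U\nabla v_2$ and $v_1\nabla u_1-v_2\nabla u_2=v_1\nabla U+V\nabla u_2$, and the reaction terms via $u_1^2-u_2^2=(u_1+u_2)U$, $u_1v_1-u_2v_2=u_1V+v_2U$ and so on, yields linear parabolic equations for $U$ and $V$ with homogeneous Neumann boundary data and zero initial data, whose coefficients are bounded on $\overline Q_T$ because both solutions lie in $C^{2,1}(\overline Q_T)$.

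Next I would set $y(t)=\int_\Omega(U^2+V^2)\,\dx$, multiply the $U$-equation by $U$ and the $V$-equation by $V$, integrate over $\Omega$, and integrate by parts using the Neumann conditions. The diffusion terms produce $-d_1\int_\Omega|\nabla U|^2-d_2\int_\Omega|\nabla V|^2$, while the taxis terms produce, after integration by parts, the gradient cross term $\int_\Omega(\chi u_1-\xi v_1)\nabla U\cdot\nabla V$ together with the zeroth-order contributions $-\tfrac{\chi}{2}\int_\Omega\Delta v_2\,U^2$ and $\tfrac{\xi}{2}\int_\Omega\Delta u_2\,V^2$. Crucially, the gradient cross term involves only the constructed solution $(u_1,v_1)$; using its a priori bounds together with \eqref{2.32} and the choice $R\le 3\rho/(\sigma+\sigma^3)$ from \eqref{2.2d}, one estimates $\|\chi u_1-\xi v_1\|_{L^\infty}\le\chi\sigma^2+\xi\sigma\le R(\sigma^3+\sigma)/3\le\rho\le\min\{d_1,d_2\}$, so that Young's inequality bounds the cross term by $\tfrac{\rho}{2}\int_\Omega(|\nabla U|^2+|\nabla V|^2)$, which is absorbed into the diffusion dissipation since $d_1,d_2\ge\rho$.

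All remaining terms --- the reaction contributions, the $\Delta u_2,\Delta v_2$ terms, and the off-diagonal products $c_1u_1UV$ and $v_1UV$ --- are bounded by $C\,y(t)$ using the $L^\infty$ bounds of $u_i,v_i,\Delta u_2,\Delta v_2$ on $\overline Q_T$ and the elementary inequality $2|UV|\le U^2+V^2$. This leads to $\tfrac12 y'(t)\le -\tfrac{\rho}{2}\int_\Omega(|\nabla U|^2+|\nabla V|^2)\,\dx+Cy(t)\le Cy(t)$ with $y(0)=0$, whence Gronwall's inequality forces $y\equiv0$ and therefore $U\equiv V\equiv0$ on $\overline Q_T$, giving uniqueness. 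I expect the only delicate point to be the gradient cross term: its control is exactly where the smallness hypotheses \eqref{2.32} and the quantitative choice of $R$ in \eqref{2.2d} enter, and it is essential to arrange the splitting so that this term carries the already-bounded component $(u_1,v_1)$ rather than the arbitrary solution $(u_2,v_2)$, whose bounds are not known a priori.
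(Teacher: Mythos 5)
Your proposal is correct and follows essentially the same route as the paper's proof: comparing the arbitrary $C^{2,1}$ solution with the one constructed in Lemma \ref{l3.4}, splitting the taxis differences so that the gradient cross term carries the quantitatively bounded constructed solution, absorbing it into the diffusion via $\chi\le\sigma R/3$, $\xi\le R/3$ and $R\le 3\rho/(\sigma+\sigma^3)$, and concluding with Gronwall from zero initial data. The only (immaterial) differences are notational (your $(u_1,v_1)$ is the paper's $(u_2,v_2)$) and technical bookkeeping: you merge the two cross terms into $\int_\Omega(\chi u_1-\xi v_1)\nabla U\cdot\nabla V\,\dx$ and handle the $U\nabla v_2\cdot\nabla U$ term by a further integration by parts yielding $-\frac{\chi}{2}\int_\Omega\Delta v_2\,U^2\,\dx$, whereas the paper estimates the cross terms separately and treats the corresponding term by Young's inequality against half the diffusion.
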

\begin{proof}
On the one hand, let $(u_1,v_1)\in[C^{2,1}(\overline Q_T)]^2$ be a solution of \eqref{1.1}. By using the maximum principle, it is easy to see that $u_1,v_1\ge0$. Clearly, there is $C_0>0$ such that
\bes
|u_1|_{2,\ol Q_T},\ |v_1|_{2,\ol Q_T}\le C_0.\label{3.15}
\ees
On the other hand, suppose that $(u_2,v_2)$ is the solution of \eqref{1.1} obtained in Lemma \ref{l3.4}. Then $(u_2,v_2)$ satisfies \eqref{3.17} and \eqref{3.16}, and hence
\bes
0\le u_2\le \sigma^2\ \ {\rm and}\ \ 0\le v_2\le \sigma.\label{3.17a}
\ees

Let $w=u_1-u_2$ and $z=v_1-v_2$. Then $w$ and $z$ satisfy
\bess
\left\{\begin{array}{lll}
 w_t-d_1\Delta w+\chi\nabla\cdot (w\nabla v_1)+[a_1-c_1v_1+b_1(u_1+u_2)]w\\[2mm]
\qquad=-\chi\nabla\cdot(u_2\nabla z)+c_1u_2z,\ \ &x\in\Omega,\ \ 0<t\le T,\\[2mm]
\dd\frac{\partial w}{\partial\nu}=0, &x\in\partial\Omega, \ \ 0\le t\le T,\\[2mm]
 w(x,0)=0, &x\in\bar\Omega,
 \end{array}\right.
 \eess
and
\bess
\left\{\begin{array}{lll}
 z_t-d_2\Delta z-\xi\nabla\cdot(z\nabla u_1)+(-a_2+u_1+b_2(v_1+v_2))z\\[2mm]
\qquad  =\xi\nabla\cdot(v_2\nabla w)-v_2w,\ \ &x\in\Omega,\ \ 0<t\le T,\\[2mm]
 \dd\frac{\partial z}{\partial\nu}=0,\ \ &x\in\partial\Omega, \ \ 0\le t\le T,\\[2mm]
z(x,0)=0, &x\in\bar\Omega.
 \end{array}\right.
 \eess
Making use of the testing procedure, by \eqref{2.1}, \eqref{2.32}, \eqref{3.17a} and \eqref{3.15}, we have that, for some $C_1,C_2>0$,
\bes
\dd\frac12\frac{d}{dt}\int_\Omega w^2{\rm d}x&=&-d_1\int_\Omega|\nabla w|^2{\rm d}x+\chi\int_\Omega w\nabla w\cdot\nabla v_1{\rm d}x+\chi\int_\Omega u_2\nabla z\cdot\nabla w{\rm d}x\nonumber\\[1mm]
&&-\int_\Omega w^2[a_1-c_1v_1+b_1(u_1+u_2)]{\rm d}x+c_1\int_\Omega u_2wz{\rm d}x\nonumber\\[1mm]
&\le&-\dd\frac{d_1}2\int_\Omega|\nabla w|^2{\rm d}x+\dd\frac{\chi^2}{2d_1}\int_\Omega w^2|\nabla v_1|^2{\rm d}x+\dd\frac{\chi}{2}\int_\Omega u_2|\nabla w|^2{\rm d}x\nonumber\\[1mm]
&&+\dd\frac{\chi}{2}\int_\Omega u_2|\nabla z|^2{\rm d}x+c_1\int_\Omega v_1w^2{\rm d}x+c_1\int_\Omega u_2wz{\rm d}x\nonumber\\[1mm]
&\le&\left(-\dd\frac{\rho}2+\dd\frac{\sigma^3 R}{6}\right)\int_\Omega|\nabla w|^2{\rm d}x+\dd\frac{\sigma^3 R}{6}\int_\Omega |\nabla z|^2{\rm d}x+C_1\int_\Omega(w^2+z^2){\rm d}x,\label{3.18}
\ees
and
\bes
\dd\frac12\frac{d}{dt}\int_\Omega z^2{\rm d}x&=&-d_2\int_\Omega|\nabla z|^2{\rm d}x-\xi\int_\Omega z\nabla z\cdot\nabla u_1{\rm d}x-\xi\int_\Omega v_2\nabla w\cdot\nabla z{\rm d}x\nonumber\\[1mm]
&&-\int_\Omega z^2(-a_2+u_1+b_2(v_1+v_2)){\rm d}x-\int_\Omega v_2wz{\rm d}x\nonumber\\[1mm]
&\le&-\dd\frac{d_2}{2}\int_\Omega|\nabla z|^2{\rm d}x+\dd\frac{\xi^2}{2d_2}\int_\Omega z^2|\nabla u_1|^2{\rm d}x+\dd\frac{\xi}{2}\int_\Omega v_2|\nabla w|^2{\rm d}x \nonumber\\[1mm]
&&+\dd\frac{\xi}{2}\int_\Omega v_2|\nabla z|^2{\rm d}x+a_2\int_\Omega z^2{\rm d}x-\int_\Omega v_2wz{\rm d}x\nonumber\\[1mm]
&\le&\left(-\dd\frac{\rho}{2}+\dd\frac{\sigma R}{6}\right)\int_\Omega|\nabla z|^2{\rm d}x+\dd\frac{\sigma R}{6}\int_\Omega |\nabla w|^2+C_2\int_\Omega(w^2+z^2){\rm d}x.\label{3.19}
\ees
It follows from \eqref{3.18} and \eqref{3.19} that
\bes
\dd\frac12\frac{d}{dt}\int_\Omega (w^2+z^2){\rm d}x&\le&\left(-\dd\frac{\rho}2+\dd\frac{\sigma R}{6}+\dd\frac{\sigma^3R}{6}\right)\kk(\int_\Omega|\nabla w|^2{\rm d}x+\int_\Omega|\nabla z|^2{\rm d}x\rr)\nonumber\\[1mm]
&&+(C_1+C_2)\int_\Omega(w^2+z^2){\rm d}x. \label{3.20}
\ees
From the definition of $R$, we have $R\le 3\rho/(\sigma+\sigma^3)$ and hence
\[-\dd\frac{\rho}2+\dd\frac{\sigma R}{6}+\dd\frac{\sigma^3R}{6}\le0.\]
This combined with \eqref{3.20} yields
\bess
\frac{d}{dt}\int_\Omega (w^2+z^2){\rm d}x\le2(C_1+C_2)\int_\Omega(w^2+z^2){\rm d}x.
\eess
Noting that $w(x,0)=z(x,0)=0$, Gronwall's lemma asserts that $w=z=0$. This completes the proof.
\end{proof}

\begin{proof}[\bf Proof of Theorem {\rm\ref{t1}}]
Combining the conclusions of Lemmas \ref{l3.4}, \ref{l3.5}, and using the arbitrariness of $T\ge 1$ we know that the solution $(u,v)$ of \eqref{1.1} exists uniquely and globally, and the estimate \eqref{1.2} holds.
\end{proof}

 \end{document}